\NewDocumentCommand{\entropy}{om}{\mathbb{H}\left[#2
    \IfValueT{#1}{\,\middle|\,#1}\right]}
\NewDocumentCommand{\bentropy}{lm}
  {\widetilde{\mathbb{H}}#1\left[#2\right]}
\NewDocumentCommand{\mutualInfo}{omm}{\mathbb{I}\left[#2;#3
    \IfValueT{#1}{\,\middle|\,#1}\right]}
\newtheorem{theorem}{Theorem}
\newtheorem{lemma}{Lemma}
\newtheorem{remark}{Remark}
\newlist{enumerate*}{enumerate*}{1}
\setlist[enumerate*]{label=(\arabic*)}
\newcommand{\ben}{\begin{eqnarray}}
\newcommand{\een}{\end{eqnarray}}
\title{Categorical Matrix Completion}
\author{\IEEEauthorblockN{Yang Cao and Yao Xie}
\IEEEauthorblockA{H. Milton Stewart School of Industrial and Systems Engineering, Georgia Institute of Technology\\
\{caoyang, yao.xie\}@gatech.edu}
}
\begin{document}
\maketitle

\begin{abstract}

We consider the problem of completing a matrix with categorical-valued entries from partial observations. This is achieved by extending the formulation and theory of one-bit matrix completion  \cite{davenport20141}. We recover a low-rank matrix $X$ by maximizing the likelihood ratio with a constraint on the nuclear norm of $X$, and the observations are mapped from entries of $X$ through multiple link functions. We establish theoretical upper and lower bounds on the recovery error, which meet up to a constant factor $\mathcal{O}(K^{3/2})$ where $K$ is the fixed number of categories. The upper bound in our case depends on the number of categories implicitly through a maximization of terms that involve the smoothness of the link functions. In contrast to one-bit matrix completion, our bounds for categorical matrix completion are optimal up to a factor on the order of the square root of the number of categories, which is consistent with an intuition that the problem becomes harder when the number of categories increases. By comparing the performance of our method with the conventional matrix completion method on the MovieLens dataset, we demonstrate the  advantage of our method.

\end{abstract}

\section{Introduction}

Recovering a low-rank matrix $M$ from a subset of its entries is a fundamental problem that arises from many real-world applications. The so-called matrix completion problem was originally formulated as estimating a matrix $M$ with real-valued entries, subjecting to the data fit constraint \cite{candes2009exact,candes2010matrix}. However, in many problems entries are categorical (e.g., in recommender systems the ratings take integer values 1 to 5, or in health care applications, where the results are positive, negative or uncertain.) A better formulation for these scenarios would be {\it categorical matrix completion.}

In this paper, we consider categorical matrix completion by extending the formulation of  one-bit matrix completion \cite{davenport20141} to deal with categorical entries and adopt the proof techniques to obtain upper and lower bounds. Assume the input variables form a low-rank matrix $X$, and we observe partial entries of a matrix which are categorical responses of the underlying low-rank matrix. A new problem arises in the categorical setting is to choose appropriate link functions $f_k$'s that map entries of $X$ to entries of the observed matrix $M$. We consider multinomial logistic regression link functions, which are smooth 
and they are easy to construct for an arbitrary number of categories.
We consider a nuclear norm regularized maximum likelihood estimator with a likelihood function for categorical distribution (different from the Bernoulli distribution used in the one-bit case). To obtain theoretical upper and lower bounds, we introduce new conditions taking in account of the characteristics of the categorical distribution.
Our upper and lower bounds match up to a factor that is on the order of the square root of the number of categories. 
Finally, we compare the performance of our method with the convention matrix completion method on the MovieLens dataset.

As mentioned, a closely related work is one-bit matrix completion \cite{davenport20141}, where the matrix entries are binary valued 
and therein the authors establish theoretical upper and lower bounds for the mean squared error of the recovered matrix which demonstrates the optimality of the estimator.
Recently, \cite{klopp2014adaptive} considers matrix completion over finite alphabet with a nuclear norm regularization, and considers a more general sampling model that only requires knowledge about an upper bound for the entries of the matrix; a theoretical upper bound is given therein, which has a faster convergence rate than that in \cite{davenport20141}; there is no theoretical lower bound though. A more recent work \cite{lafond2014probabilistic} provides a lower bound for the special case when $K = 2$. Other related work on matrix completion with quantized entries or Poisson observations include \cite{cai2013max,soni2014noisy,soniestimation,cao2015poisson,lafond2015low}.

The rest of this paper is organized as follows. Section II sets up the formalism for categorical matrix completion and the nuclear norm regularized maximum likelihood estimator. Section III establishes the upper and lower bounds for the recovery error. Section IV presents an numerical example using the MovieLens dataset to demonstrate the performance of our method. All proofs are delegated to the appendix\footnote{Full version of the paper can be downloaded from \\{www2.isye.gatech.edu/$\sim$yxie77/Categorical-MC-CAMSAP.pdf}}.

The notation in this paper is standard. In particular, $[d] =\{1,2,\ldots,d\}$; $\mathbb{I}_{[\varepsilon]}$ is the indicator function for an event $\varepsilon$; $|A|$ denotes the number of elements in a set $A$. Let entries of a matrix $M$ be denoted by $M_{ij}$ or $[M]_{ij}$, $\|M\|$ be the spectral norm which is the largest absolute singular value, $\|M\|_{F} = \sqrt{\sum_{i,j}M_{ij}^2}$ be the Frobenius norm, $\|M\|_*$ be the nuclear norm which is the sum of the singular values and finally $\|M\|_{\infty}$ = $\max_{ij}|M_{ij}|$ be the infinity norm. Let $\mbox{rank}(M)$ denote the rank of a matrix $M$. The inner product for two matrices $M_1$ and $M_2$ is denoted by $\langle M_1, M_2 \rangle \triangleq \mbox{tr}(M_1^T M_2)$. Given the number $K$ of categories and any set $\{a_1,a_2,\ldots,a_K\}$, we say that a random variable $X$ satisfies the categorical distribution with the parameters $(p_1,p_2,\ldots,p_K)$ if $\sum_{k=1}^K p_k=1$ and $\mathbb{P}(X=a_k) = p_k$ for all $k\in [K]$. Also define the Kullback-Leibler (KL) divergence between two categorical distributions with parameters $(p_1,\ldots,p_K)$ and $(q_1,\ldots,q_K)$ as
\[
D\left( (p_1,p_2,\ldots,p_K) \| (q_1,q_2,\ldots,q_K) \right) \triangleq \sum_{k=1}^K p_k\log\frac{p_k}{q_k},
\]
and define their Hellinger distance as
\[
d_H^2\left( (p_1,p_2,\ldots,p_K) , (q_1,q_2,\ldots,q_K) \right) \triangleq \sum_{k=1}^K \left(\sqrt{p_k}-\sqrt{q_k}\right)^2.
\]

\section{Formulation}

Suppose we make noisy observations of a matrix $M \in \mathbb{R}^{d_1 \times d_2}$ on an index set $\Omega \subset [d_1] \times [d_2]$. The indices are randomly selected with $\mathbb{E}|\Omega|=m$, or, equivalently, the indicator functions $\mathbb{I}_{\{(i,j) \in \Omega\}}$ are i.i.d. Bernoulli random variables with parameter  $m/(d_1 d_2)$. Assume that the observed entries take one of the $K$ possible values: $\{a_1,a_2,\ldots,a_K\}$. Given a set of differentiable link functions $f_k$, $k = 1, \ldots, K$ that satisfy $\sum_{k=1}^K f_k(x) = 1$, we have that the noisy observations follow the categorical distribution:
\begin{equation}
Y_{ij} = a_k \mbox{ with probability } f_k(M_{ij}) \mbox{ for } (i,j) \in \Omega.
\label{observationmodel}
\end{equation}
Our goal is to recover $M$ from the categorical observations $\{Y_{ij}\}_{{(i,j)}\in \Omega}$ and further filling the missing entries using the link functions and the entries of recovered matrix $M$. This is done by letting $Y_{ij}=a_{k^*}$,  for all $(i,j) \in ([d_1]\times [d_2])/\Omega$, where $k^* = \arg \max_{1\leq k \leq K} f_k(M_{ij})$.

The following are two simple illustrative examples for link functions. In a $K$-categorical recommender system, there are $K$ possible ratings, and the matrix with entries $M_{ij} \in [K]$ is the true rating matrix of $d_1$ users for $d_2$ items.   Suppose users can be in three possible moods: good, normal, and bad. The link function characterizes the bias of a user and we can observe a subset of biased ratings. Suppose a user tends to rate an item one category lower than the truth in a bad mood, and one category higher than the truth in a good mood, with the probabilities of being in bad, normal, and good mood being 0.2, 0.6, and 0.2, then the link functions are given by
\begin{equation*}
\left\{
\begin{array}{l}
f_1(1) = 0.8; f_1(2) = 0.2; f_1(x) = 0, \mbox{otherwise.}\\
f_k(k-1) = 0.2; f_k(k) = 0.6; f_k(k+1) = 0.2; \\
f_k(x) = 0, \mbox{otherwise, }\quad k = 2, \ldots, K-1;\\
f_K(K-1) = 0.2; f_K(K) = 0.8; f_1(x) = 0, \mbox{otherwise.}
\end{array}\right.
\end{equation*}
The second example is the widely used proportional-odds cumulative logit model, or multinomial logistic model \cite{agresti2011categorical}, where
\begin{equation}
f_k(x) \propto e^{\alpha_k+\beta_k x}, \quad k=[K],  
\label{multi-nomial}
\end{equation}
and $\sum_{k=1}^K f_k(x) = 1$. Here $\alpha_k$ and $\beta_k$ are parameters of the model that are given (or obtained from a training stage).

In addition, we make assumptions for the matrix $M$ to be recovered. First, we assume an upper bound $\alpha$ for $\|M\|_{\infty}$ to entail the recovery problem is well posed. 
Second, similarly to the conventional matrix completion, we assume that the nuclear norm of the matrix is bounded $\|M\|_* \leq \alpha \sqrt{rd_1 d_2}$. This assumption can be viewed as a relaxation of $\|M\|_{\infty} \leq \alpha$ and rank$(M) \leq r$ \cite{davenport20141}, since $\|M\|_* \leq \sqrt{\mbox{rank}(M)}\|M\|_F$ and $\|M\|_F \leq \sqrt{d_1 d_2}\|M\|_{\infty}$ lead to $\|M\|_* \leq \alpha \sqrt{r d_1 d_2}$.

To estimate $M$, we consider the following nuclear norm regularized maximum log-likelihood formulation. In our case, the log-likelihood function is given by
\[
F_{\Omega,Y}(X) \triangleq \sum_{(i,j)\in \Omega} \sum_{k=1}^K \mathbb{I}_{[Y_{ij}=a_k]}\log(f_k(X_{ij})).
\]
Based on the assumptions above, we consider a set $\mathcal{S}$ of candidate estimators:
\begin{equation}
\begin{split}
&\mathcal{S} \triangleq \left\{ X \in \mathbb{R}_+^{d_1 \times d_2} : \|X\|_* \leq \alpha \sqrt{r d_1 d_2}, \right. \\
&\qquad \qquad \qquad \qquad \left. -\alpha \leq X_{ij} \leq \alpha, \forall (i,j) \in [d_1] \times [d_2] \right\},
\end{split}
\label{searchspace}
\end{equation}
and recover $M$ by solving the following optimization problem
\begin{equation}
\begin{split}
&\widehat{M} = \arg \max_{X \in \mathcal{S}} F_{\Omega,Y}(X).
\end{split}
\label{optimization_problem}
\end{equation}
This problem is convex and it can be solved exactly by the interior-point method \cite{liu2009interior} or approximately by the efficient singular value thresholding method \cite{cai2010singular}.

\label{sec:formulation}
\section{Performance bounds}

To establish our performance bounds, we make the following assumptions on the link functions  $f_k$. 
Define for any $k \in [K]$ and a region $x\in [-\alpha, \alpha]$
\begin{equation*}
L_{\alpha}^{(k)} \triangleq \sup_{|x|\leq \alpha} \frac{|f_k'(x)|}{f_k(x)},
\quad
\beta_{\alpha}(x) \triangleq \max_{1\leq k \leq K} \frac{(f_k'(x))^2}{f_k(x)}.
\end{equation*}
Assume that (1) there exists a positive constant $L_{\alpha}$ such that
\begin{equation}
\max_{1\leq k\leq K} L_{\alpha}^{(k)} \leq L_{\alpha}.
\label{constant1}
\end{equation}
The interpretation of this assumption is that the function $f_k(x)$ does not change sharply when it is near the boundaries of the region;
and (2) there exist two positive constants $\beta_{\alpha}^{-}$ and $\beta_{\alpha}^{+}$ such that
\begin{equation}
\inf_{|x|\leq \alpha} \beta_{\alpha}(x) \geq \beta_{\alpha}^{-} \quad \mbox{and} \quad \sup_{|x|\leq \alpha} \beta_{\alpha}(x) \leq \beta_{\alpha}^{+}.
\label{constant2}
\end{equation}
This lower bound for $\beta_{\alpha}(x)$ means that for every fixed $x \in [-\alpha, \alpha]$, there exists at least one $k \in [K]$ such that $f_k$ does not change too slowly. Another interpretation for the assumption on the lower bound is that $f_k$'s overlap moderately so that we may determine the category uniquely for a given $x\in [-\alpha,\alpha]$. The interpretation of upper bound for $\beta_{\alpha}(x)$ is similar to that for the upper bound $L_{\alpha}^{(k)}$. When $K = 2$, these assumptions coincide with those in \cite{davenport20141}. 

Many link functions satisfy the previous two assumptions, including the widely used multinomial logistic model (\ref{multi-nomial}). 
Fig. \ref{fig:function} illustrates one such example of link functions where $\alpha=10$ and $K=5$.
\begin{figure}[h]
\begin{center}
\includegraphics[width = 0.8\linewidth]{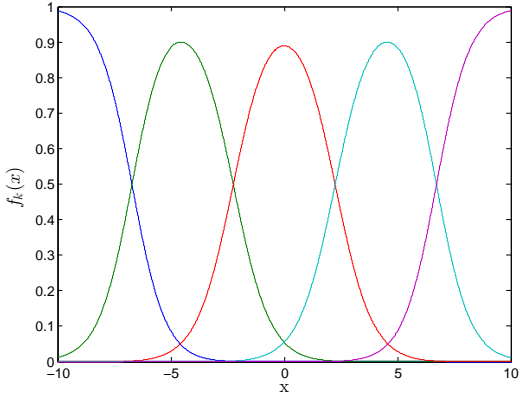}
\caption{$f_k(x), ~k \in [K]$ with $\alpha=10$ and $K=5$.}
\label{fig:function}
\end{center}
\vspace{-0.2in}
\end{figure}
Furthermore, define the average Hellinger distance and KL divergence for entries of two matrices $P,Q \in \mathbb{R}^{d_1 \times d_2}$ as:
\begin{equation}
\begin{split}
d_H^2\left(f(P) , f(Q) \right)
=&\frac{1}{d_1 d_2} \sum_{i,j} d_H^2 \left( f(P_{ij}), f(Q_{ij}) \right),\\
D\left(f(P) \| f(Q) \right)
=&\frac{1}{d_1 d_2} \sum_{i,j} D \left( f(P_{ij})\| f(Q_{ij}) \right).
\end{split}
\end{equation}

The following two lemmas are needed to prove the upper bound. To use the contraction principle in Lemma \ref{firstlemma}, we introduce a function
\begin{equation}
\bar{F}_{\Omega,Y}(X) = F_{\Omega,Y}(X) - F_{\Omega, Y}(0).
\label{likelihood}
\end{equation}
\begin{lemma}
Let $\bar{F}_{\Omega, Y}(X)$ be the likelihood function defined in (\ref{likelihood}) and $\mathcal{S}$ be the set defined in (\ref{searchspace}), then
\begin{equation}
\begin{split}
&\mathbb{P} \left\{ \sup_{X\in \mathcal{S}} \left| \bar{F}_{\Omega, Y}(X)-\mathbb{E}\bar{F}_{\Omega, Y}(X)\right| \right.\\
& \quad \left. \geq C'K L_{\alpha} \alpha\sqrt{r} \cdot \left(\sqrt{m(d_1+d_2)+d_1 d_2 \log(d_1 d_2)}\right) \right\} \\
& \leq \frac{C}{d_1 d_2},
\end{split}
\end{equation}
where $C'$ and $C$ are absolute positive constants and the probability and the expectation are both over $\Omega$ and $Y$.
\label{firstlemma}
\end{lemma}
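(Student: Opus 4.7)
The plan is a symmetrization-contraction argument to bound the expected supremum, then a McDiarmid-type bounded-differences inequality to upgrade the bound to hold with high probability. Let
\begin{equation*}
Z = \sup_{X\in\mathcal{S}}\bigl|\bar{F}_{\Omega,Y}(X) - \mathbb{E}\bar{F}_{\Omega,Y}(X)\bigr|.
\end{equation*}
The process $\bar{F}_{\Omega,Y}(X) = \sum_{i,j} \mathbb{I}_{\{(i,j)\in\Omega\}}\sum_{k=1}^K \mathbb{I}_{[Y_{ij}=a_k]}(\log f_k(X_{ij}) - \log f_k(0))$ is a sum of $d_1 d_2$ independent summands (indexed by $(i,j)$), each bounded uniformly on $\mathcal{S}$ by $2L_\alpha\alpha$ because assumption (\ref{constant1}) makes $\log f_k$ be $L_\alpha$-Lipschitz on $[-\alpha,\alpha]$.

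For $\mathbb{E}Z$, standard symmetrization introduces Rademacher variables $\epsilon_{ij}^{(k)}$, and the triangle inequality distributes the $K$ category-indicator terms into $K$ separate Rademacher processes, which is where the factor of $K$ in the statement arises. For each fixed $k$, the map $x\mapsto \log f_k(x) - \log f_k(0)$ is $L_\alpha$-Lipschitz on $[-\alpha,\alpha]$ and vanishes at $0$, so the Ledoux-Talagrand contraction principle reduces each piece to a linear Rademacher process $|\langle W^{(k)},X\rangle|$ at a cost of $L_\alpha$, where $W^{(k)}_{ij}=\epsilon_{ij}^{(k)}\mathbb{I}_{\{(i,j)\in\Omega\}}\mathbb{I}_{[Y_{ij}=a_k]}$. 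Nuclear/spectral duality combined with the constraint $\|X\|_*\leq \alpha\sqrt{rd_1 d_2}$ then reduces the problem to controlling $\mathbb{E}\|W^{(k)}\|$. Each $W^{(k)}$ has independent, bounded, symmetric entries that are nonzero with probability at most $m/(d_1 d_2)$, so standard spectral-norm estimates for sparse random matrices (via matrix Bernstein, or Seginer's inequality) give $\mathbb{E}\|W^{(k)}\|\lesssim \sqrt{m(d_1+d_2)/(d_1 d_2)}$. Assembling these pieces yields $\mathbb{E}Z\lesssim KL_\alpha\alpha\sqrt{r}\cdot\sqrt{m(d_1+d_2)}$.

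To pass from expectation to tail, I would apply McDiarmid's inequality to $Z$ viewed as a function of the $d_1 d_2$ independent coordinates $(\mathbb{I}_{\{(i,j)\in\Omega\}},Y_{ij})$. Each coordinate changes $Z$ by at most $O(L_\alpha\alpha)$, so McDiarmid produces sub-Gaussian deviations with variance proxy of order $L_\alpha^2\alpha^2 d_1 d_2$; choosing the deviation to yield probability $C/(d_1 d_2)$ contributes the additional $L_\alpha\alpha\sqrt{d_1 d_2\log(d_1 d_2)}$, which combines with the expectation bound under a single square root to match the statement. The main obstacle is the contraction step: because $\sum_k \mathbb{I}_{[Y_{ij}=a_k]}=1$, the per-category indicators are dependent across $k$ for fixed $(i,j)$, so a single application of Ledoux-Talagrand does not apply cleanly; splitting into $K$ category-indexed Rademacher processes is what ultimately generates the $K$ factor in the bound. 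A secondary subtlety is ensuring that the logarithmic factors carried by the matrix Bernstein estimate for $\mathbb{E}\|W^{(k)}\|$ are dominated by the $\sqrt{d_1 d_2\log(d_1 d_2)}$ term already produced by the concentration step, so that they do not inflate the leading spectral contribution.
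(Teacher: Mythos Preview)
Your symmetrization--contraction--duality skeleton matches the paper's, but the concentration mechanism you use is genuinely different. The paper never bounds $\mathbb{E}Z$ directly and never invokes McDiarmid. Instead, it runs the entire symmetrization/contraction argument at the level of $h$-th moments, obtaining
\[
\mathbb{E}\Bigl[\sup_{X\in\mathcal{S}}\bigl|\bar F_{\Omega,Y}(X)-\mathbb{E}\bar F_{\Omega,Y}(X)\bigr|^h\Bigr]
\le (4KL_\alpha)^h\bigl(\alpha\sqrt{rd_1d_2}\bigr)^h\,\mathbb{E}\bigl[\|E\circ\Delta_\Omega\|^h\bigr],
\]
then plugs in an $h$-th moment bound for $\|E\circ\Delta_\Omega\|$ borrowed from \cite{davenport20141}, sets $h=\log(d_1d_2)$, and finishes with Markov's inequality. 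In that argument the $d_1d_2\log(d_1d_2)$ term under the square root comes entirely from the spectral-norm moment bound, not from a separate concentration step. Your route (expectation bound plus bounded differences) is arguably more transparent and avoids tracking moment exponents; the paper's route has the advantage that the sharp spectral estimate from \cite{davenport20141} is stated precisely in the moment form it needs.

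Two technical caveats on your version. First, matrix Bernstein is not sharp enough here: it yields $\mathbb{E}\|W^{(k)}\|\lesssim\sqrt{\tfrac{m(d_1+d_2)}{d_1d_2}\log(d_1+d_2)}$, which after multiplying by $\alpha\sqrt{rd_1d_2}$ carries an extra $\sqrt{\log(d_1+d_2)}$ on the leading $\sqrt{m(d_1+d_2)}$ term and overshoots the stated bound. You need a Seginer/Lata\l a-type estimate (or the bound in \cite{davenport20141}) that gives $\mathbb{E}\|W^{(k)}\|\lesssim\sqrt{m(d_1+d_2)/(d_1d_2)}+\sqrt{\log(d_1d_2)}$ with the logarithm \emph{additive}. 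Second, once you use that sharper estimate, the additive $\sqrt{\log(d_1d_2)}$ piece, after multiplication by $KL_\alpha\alpha\sqrt{rd_1d_2}$, contributes $KL_\alpha\alpha\sqrt{r}\sqrt{d_1d_2\log(d_1d_2)}$; this is not dominated by your McDiarmid fluctuation $L_\alpha\alpha\sqrt{d_1d_2\log(d_1d_2)}$ (it is larger by $K\sqrt{r}$), but it \emph{is} already inside the target bound, so the conclusion still follows. The McDiarmid step then adds nothing new to the order of the bound---it merely certifies the high-probability statement.
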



\begin{lemma}
For $M_{ij}$ and  $\widehat{M}_{ij}$ both in $[-\alpha, \alpha]$, $\forall (i,j) \in [d_1] \times [d_2]$, we have
\[
d_H^2(f(M),f(\widehat{M})) \geq \frac{\beta_{\alpha}^-}{4} \frac{\|M-\widehat{M}\|_F^2}{d_1 d_2}.
\]
\label{secondlemma}
\end{lemma}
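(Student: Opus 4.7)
The key observation is that both sides of the claim decouple across entries: by definition $d_H^2(f(M), f(\widehat{M})) = \frac{1}{d_1 d_2}\sum_{i,j}\sum_{k}\bigl(\sqrt{f_k(M_{ij})}-\sqrt{f_k(\widehat{M}_{ij})}\bigr)^2$ and $\|M-\widehat{M}\|_F^2/(d_1 d_2) = \frac{1}{d_1 d_2}\sum_{i,j}(M_{ij}-\widehat{M}_{ij})^2$, so the lemma reduces to the scalar statement
\[
\sum_{k=1}^K \bigl(\sqrt{f_k(x)} - \sqrt{f_k(y)}\bigr)^2 \geq \frac{\beta_\alpha^-}{4}(x-y)^2 \quad \text{for all } x,y \in [-\alpha,\alpha],
\]
which I would then prove entry by entry.

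For this pointwise inequality I would use a mean-value argument. The assumption $L_\alpha^{(k)} = \sup|f_k'|/f_k \leq L_\alpha$ forces $f_k > 0$ on $[-\alpha,\alpha]$, so $\sqrt{f_k}$ is $C^1$ on the closed interval between $x$ and $y$, with derivative $f_k'/(2\sqrt{f_k})$. The MVT then yields some $\xi_k \in [-\alpha,\alpha]$ with
\[
\sqrt{f_k(x)}-\sqrt{f_k(y)} = (x-y)\,\frac{f_k'(\xi_k)}{2\sqrt{f_k(\xi_k)}},
\]
so squaring and summing,
\[
\sum_{k=1}^K \bigl(\sqrt{f_k(x)}-\sqrt{f_k(y)}\bigr)^2 = \frac{(x-y)^2}{4}\sum_{k=1}^K \frac{(f_k'(\xi_k))^2}{f_k(\xi_k)} \geq \frac{(x-y)^2}{4}\max_k \frac{(f_k'(\xi_k))^2}{f_k(\xi_k)}.
\]
Invoking the lower-bound hypothesis $\beta_\alpha(\xi) = \max_k (f_k'(\xi))^2/f_k(\xi) \geq \beta_\alpha^-$ for any $\xi \in [-\alpha,\alpha]$ would then close the proof.

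\textbf{Main obstacle.} The MVT produces a possibly different intermediate point $\xi_k$ for each coordinate $k$, whereas the assumption bounds $\max_k$ at a single point, so the last inequality above does not follow literally from the definition of $\beta_\alpha^-$. To close this gap cleanly I would recast the argument as a second-order Taylor expansion of $G(t) := \sum_k\bigl(\sqrt{f_k(t)}-\sqrt{f_k(y)}\bigr)^2$ around $t=y$: $G(y) = G'(y) = 0$ using the identity $\sum_k f_k' \equiv 0$, and $G''(y) = \tfrac{1}{2}\sum_k (f_k'(y))^2/f_k(y) \geq \beta_\alpha^-/2$, yielding the target bound infinitesimally. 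Extending it to the entire interval reduces to controlling the curvature correction $\sum_k (\sqrt{f_k(t)}-\sqrt{f_k(y)})(\sqrt{f_k})''(t)$ uniformly in $t$, which can be done using the identities $\sum_k f_k \equiv 1$ and $\sum_k f_k' \equiv 0$ together with the uniform lower bound $\beta_\alpha(t) \geq \beta_\alpha^-$; this curvature step is the most technical part of the argument.
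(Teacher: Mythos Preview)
Your mean-value computation is \emph{exactly} the paper's proof: it applies the MVT to each $\sqrt{f_k}$ to get a point $\xi_k\in[x,y]$, writes
\[
\sum_{k=1}^K\bigl(\sqrt{f_k(x)}-\sqrt{f_k(y)}\bigr)^2=\frac{(x-y)^2}{4}\sum_{k=1}^K\frac{(f_k'(\xi_k))^2}{f_k(\xi_k)}
\ge \frac{(x-y)^2}{4}\max_{k}\frac{(f_k'(\xi_k))^2}{f_k(\xi_k)},
\]
and then asserts, without further comment, that this maximum is at least $\inf_{|\xi|\le\alpha}\max_k (f_k'(\xi))^2/f_k(\xi)=\beta_\alpha^-$. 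The paper then sums over entries and divides by $d_1d_2$.

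So the ``main obstacle'' you flag is real, and it is present in the paper's own argument: with a different $\xi_k$ for each $k$, the quantity $\max_k (f_k'(\xi_k))^2/f_k(\xi_k)$ is not of the form $\beta_\alpha(\xi)$ for a single $\xi$, and the lower bound $\beta_\alpha^-$ does not follow from the definition alone. The paper simply glosses over this step. Your instinct to want a single-point version is correct; your Taylor/second-derivative route (using $G(y)=G'(y)=0$ and $G''(y)\ge\beta_\alpha^-/2$) is a reasonable way to make the argument rigorous, and is strictly more careful than what the paper does. In short: you have reproduced the paper's proof and, in addition, spotted a gap it does not address.
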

Our main results are the upper bound for the average mean square error per-entry in Theorem \ref{maintheorem}, and an information theoretic lower bound in Theorem \ref{maintheorem2}:
\begin{theorem}[Upper bound]
\label{maintheorem}
    Assume $M \in \mathcal{S}$, and $\Omega$ is chosen at random following the binomial sampling model with $\mathbb{E}[|\Omega|] = m$. Suppose that $Y$ is generated as in (\ref{observationmodel}). Let $L_{\alpha}$ and $\beta_{\alpha}^-$ be as in (\ref{constant1}) and (\ref{constant2}). Let $\widehat{M}$ be the solution to (\ref{optimization_problem}). Then with a probability exceeding $\left(1-C/(d_1 d_2)\right)$, we have
    \begin{equation}
    \begin{split}
       & \frac{1}{d_1 d_2} \|M-\widehat{M}\|_F^2 \leq \frac{C' \alpha K L_{\alpha}}{\beta_\alpha^-}
       \cdot \\
       & \qquad \sqrt{\frac{r(d_1 +d_2)}{m}} \sqrt{1+\frac{(d_1+d_2)\log(d_1 d_2)}{m}}.
    \end{split}
    \label{bound:MC}
    \end{equation}
    If $m\geq (d_1+d_2)\log(d_1 d_2)$ then (\ref{bound:MC}) simplifies to
    \begin{equation}
    \begin{split}
       & \frac{1}{d_1 d_2} \|M-\widehat{M}\|_F^2 \leq \frac{\sqrt{2}C' \alpha K L_{\alpha}}{\beta_\alpha^-}  \sqrt{\frac{r(d_1 +d_2)}{m}}.
    \end{split}
    \label{bound:MC2}
    \end{equation}
Above, $C, C'$ are absolute constants.
\end{theorem}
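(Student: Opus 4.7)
The plan is to execute the standard ``basic-inequality'' M-estimator argument, combining the two lemmas with a one-line KL-to-Hellinger step. First I would use the optimality of $\widehat{M}$: since both $M$ and $\widehat{M}$ lie in $\mathcal{S}$, $F_{\Omega,Y}(\widehat{M}) \geq F_{\Omega,Y}(M)$, which after subtracting $F_{\Omega,Y}(0)$ from both sides becomes $\bar{F}_{\Omega,Y}(\widehat{M}) \geq \bar{F}_{\Omega,Y}(M)$. On the event of probability $\geq 1 - C/(d_1 d_2)$ supplied by Lemma~\ref{firstlemma}, the uniform deviation bound applies at the random point $\widehat{M}\in\mathcal{S}$. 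Using it at both $M$ and $\widehat{M}$ and rearranging gives
\begin{equation*}
\mathbb{E}\bar{F}_{\Omega,Y}(M) - \mathbb{E}\bar{F}_{\Omega,Y}(\widehat{M}) \leq 2 C' K L_\alpha \alpha \sqrt{r}\,\sqrt{m(d_1+d_2)+d_1 d_2\log(d_1 d_2)},
\end{equation*}
where each expectation is over $\Omega, Y$ with the matrix argument treated as deterministic.

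Next, I would compute the left-hand side explicitly. The $F_{\Omega,Y}(0)$ pieces cancel, and using $\mathbb{P}(Y_{ij} = a_k\mid (i,j)\in\Omega) = f_k(M_{ij})$ together with $\mathbb{P}((i,j)\in\Omega) = m/(d_1 d_2)$ identifies the difference as $m\,D(f(M)\,\|\,f(\widehat{M}))$, the averaged per-entry KL divergence defined earlier. Passing from KL to Hellinger via the one-line inequality $D(p\|q)\geq d_H^2(p,q)$ (from $\log x \leq 2(\sqrt{x}-1)$ applied to $x = q_k/p_k$ and summed over $k$), and then invoking Lemma~\ref{secondlemma} to lower-bound the Hellinger term, yields
\begin{equation*}
\frac{m\,\beta_\alpha^-}{4\,d_1 d_2}\,\|M - \widehat{M}\|_F^2 \;\leq\; 2 C' K L_\alpha \alpha \sqrt{r}\,\sqrt{m(d_1+d_2)+d_1 d_2\log(d_1 d_2)}.
\end{equation*}

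Solving for $\|M-\widehat{M}\|_F^2/(d_1 d_2)$, factoring $m(d_1+d_2)$ out from under the square root, and absorbing the numerical constants together with the loose inequality $d_1 d_2/(d_1+d_2)^2 \leq 1$ into a new $C'$ reproduces (\ref{bound:MC}); the simplified bound (\ref{bound:MC2}) then follows at once because, under $m \geq (d_1+d_2)\log(d_1 d_2)$, the $\sqrt{1+\cdots}$ factor is at most $\sqrt{2}$. Apart from Lemma~\ref{firstlemma} itself, whose proof via symmetrization plus a contraction inequality on the Lipschitz family $\{\log f_k\}$ is the real technical work, the main conceptual hurdle is bookkeeping the dependence on $K$: the factor of $K$ in Lemma~\ref{firstlemma} arises from the sum over categories in $F_{\Omega,Y}$, and Lemma~\ref{secondlemma} is precisely what prevents a second $K$ from entering when one collapses the $K$-term Hellinger distance back into the scalar per-entry squared error, so that the final bound scales only as $KL_\alpha/\beta_\alpha^-$.
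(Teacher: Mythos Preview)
Your proposal is correct and follows essentially the same route as the paper's proof: use optimality of $\widehat{M}$ to get the basic inequality, add and subtract expectations, invoke Lemma~\ref{firstlemma} for the uniform deviation over $\mathcal{S}$, identify the expected difference as $m\,D(f(M)\|f(\widehat{M}))$, pass from KL to Hellinger, and then apply Lemma~\ref{secondlemma}; the paper also uses the same $d_1 d_2\leq(d_1+d_2)^2$ step to reach the displayed form. Your remark that $\mathbb{E}\bar{F}_{\Omega,Y}(\cdot)$ is the deterministic function evaluated at the random $\widehat{M}$, with Lemma~\ref{firstlemma}'s supremum covering that random point, is exactly the right reading and matches the paper.
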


\begin{remark}
The ratio $L_{\alpha}/\beta_{\alpha}^-$ depends on the number of categories $K$ implicitly though the maximization of the smoothness of the functions $f_k$. 
\end{remark}

\begin{remark}
For a fixed $\alpha$, we can construct function $f_k$'s such that the ratio $KL_{\alpha}/\beta_{\alpha}^-$ is less than some absolute constant for any given $K$. In other words, we may be able to choose the link functions $f_k$ such that the upper bound is independent of the number of categories. Therefore, how to choose $f_k$ that satisfies the classification requirement as well as minimizing this ratio becomes important. Examining the first inequality in (\ref{se1}), a good choice for $f_k$ should be that for any $x,y \in [-\alpha, \alpha]$, $x\neq y$,  there exists at most one $k \in [K]$ such that $f_k(x) = f_k(y)$. Fortunately, such $f_k$ is not hard to construct and one such example is the multinomial logistic model, as demonstrated in Fig. \ref{fig:function}.
\end{remark}

\begin{remark}
Given $K$,$f_k$ and $\alpha$, the mean squared error per entry in (\ref{bound:MC2}) tends to $0$ with probability $1$ as the dimensions of the matrix $M$ goes to infinity and $r=o(\log(d_1 d_2))$. In other words, one can recover accurately with a sufficiently large number of observations.
\end{remark}
%
%
The following lemmas are used in proving the lower bound.
\begin{lemma}[Lemma A.3 in \cite{davenport20141}]
Let $\mathcal{S}$ be as in (\ref{searchspace}). Let $\gamma \leq 1$ be such that $r/\gamma^2$ is an integer.
Suppose  $r/\gamma^2 \leq d_1$, then we may construct a set $\chi \in \mathcal{S}$ of size
$$
|\chi| \geq \exp\left( \frac{r d_2}{16\gamma^2} \right)
$$
with the following properties: (1) for all $X \in \chi$, each entry has $|X_{ij}| = \alpha \gamma$; and (2) for all $X^{(i)}$,$X^{(j)} \in \chi$, $i\neq j$,
$
\|X^{(i)} - X^{(j)} \|_F^2 > \alpha^2 \gamma^2 d_1 d_2/2.
$
\label{packingset}
\end{lemma}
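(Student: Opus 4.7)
The plan is to exhibit $\chi$ explicitly as a family of block-constant sign matrices scaled by $\alpha\gamma$, and then to pass to a large subfamily with well-separated Frobenius distance via a Varshamov--Gilbert style combinatorial packing on the underlying binary patterns.

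First I would set $k := r/\gamma^2$, which by hypothesis is an integer and satisfies $k \leq d_1$. Partition $[d_1]$ into $k$ equal-sized row blocks of size $d_1/k$ (assume divisibility, or truncate to the first $\lfloor d_1/k\rfloor \cdot k$ rows at negligible cost). For each template $B \in \{-1,+1\}^{k \times d_2}$, define $X^B \in \mathbb{R}^{d_1 \times d_2}$ by assigning to the $\ell$-th block of rows the value $\alpha\gamma$ times the $\ell$-th row of $B$, so that all $d_1/k$ rows in a given block are identical. Then every entry satisfies $|X^B_{ij}| = \alpha\gamma \leq \alpha$, and because $X^B$ has at most $k$ distinct rows, $\mbox{rank}(X^B) \leq k = r/\gamma^2$. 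Since $\|X^B\|_F = \alpha\gamma\sqrt{d_1 d_2}$, the bound $\|X^B\|_* \leq \sqrt{\mbox{rank}(X^B)}\,\|X^B\|_F \leq \alpha\sqrt{r d_1 d_2}$ follows, so $X^B \in \mathcal{S}$ and property (1) holds.

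Next I would relate Hamming distance between templates to Frobenius distance of the lifted matrices. If $B$ and $B'$ disagree in exactly $t$ positions, then $X^B$ and $X^{B'}$ disagree in $t \cdot d_1/k$ entries, each pair differing by $2\alpha\gamma$ in absolute value, so $\|X^B - X^{B'}\|_F^2 = 4\alpha^2\gamma^2\, t\, d_1/k$. The target separation $\|X^B - X^{B'}\|_F^2 > \alpha^2\gamma^2 d_1 d_2/2$ is therefore equivalent to demanding a codebook of strings of length $n := k d_2$ with pairwise Hamming distance exceeding $n/8$. The Varshamov--Gilbert bound guarantees the existence of such a code of size at least $2^{(1 - H_2(1/8)) n}$, where $H_2$ denotes binary entropy; since $1 - H_2(1/8) \approx 0.456 > (16 \ln 2)^{-1} \approx 0.090$, this size comfortably exceeds $\exp(n/16) = \exp(r d_2/(16\gamma^2))$. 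Taking $\chi$ to consist of the lifted images $\{X^B : B \in \mathcal{C}\}$ of such a code $\mathcal{C}$ then yields all three desired properties.

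The only real obstacle is constant-bookkeeping: one has to verify numerically that $1 - H_2(1/8)$ indeed beats $1/(16\ln 2)$ so that the prescribed cardinality lower bound is achieved, and to handle the divisibility of $d_1$ by $r/\gamma^2$ cleanly. The substantive ideas, namely using a block-repetition structure to simultaneously enforce the rank constraint and preserve entries of magnitude $\alpha\gamma$, and then transferring a Hamming packing to a Frobenius packing through the $d_1/k$ multiplicity of each template row, are both standard once the block template is fixed.
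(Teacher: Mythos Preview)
Your argument is correct and is exactly the standard construction behind this lemma. Note that the paper does not supply its own proof here: the statement is quoted directly as Lemma~A.3 of Davenport et al.\ and invoked without argument, so there is nothing in the paper to compare against. The block-constant $\pm\alpha\gamma$ matrices indexed by binary templates of size $(r/\gamma^{2})\times d_2$, together with the Varshamov--Gilbert packing on those templates, are precisely the ingredients used in the cited reference, and your verification of the nuclear-norm bound via $\|X\|_*\le\sqrt{\mathrm{rank}(X)}\,\|X\|_F$ and of the numerical inequality $1-H_2(1/8)>1/(16\ln 2)$ is accurate.
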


\begin{lemma}
Given $K$ categories, the KL divergence for two categorical probability distributions with parameter $(p_1,\ldots,p_K)$ and $(q_1,\ldots,q_K)$,  is upper-bounded by
\begin{equation}
\begin{split}
&D((x_1,\ldots,x_K)\|(y_1,\ldots,y_K)) \leq \\
& \sum_{k=1}^{K-1} [(x_k-y_k)^2+(x_k y_k -x_k^2)(1-y_K)\\
&~~~~~~+(x_k y_k-y_k^2)(1-x_K)]/[y_k(1-\sum_{i=1}^{K-1}y_i)].
\end{split} \nonumber
\end{equation}
\label{KLdivergence}
\end{lemma}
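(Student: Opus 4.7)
The plan is to start from the elementary inequality $\log t \le t-1$, valid for all $t>0$, and apply it term-by-term in the definition of the KL divergence. This yields the familiar $\chi^2$-type bound
\[
D((x_1,\dots,x_K)\|(y_1,\dots,y_K)) = \sum_{k=1}^K x_k \log \frac{x_k}{y_k} \le \sum_{k=1}^K \frac{x_k(x_k-y_k)}{y_k}.
\]
This is a clean, well-known bound, so the only real work is to verify that the right-hand side displayed in the lemma statement coincides with $\sum_{k=1}^K x_k(x_k-y_k)/y_k$. Once that equivalence is established, the lemma follows.

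To carry out the rewriting, I would peel off the $k=K$ term and use the two constraints $\sum_{k=1}^K x_k = \sum_{k=1}^K y_k = 1$, which give the identity
\[
x_K - y_K = -\sum_{k=1}^{K-1}(x_k - y_k).
\]
Substituting into $x_K(x_K-y_K)/y_K$ and merging with the $k\le K-1$ part lets me collect everything under a single sum over $k=1,\dots,K-1$:
\[
\sum_{k=1}^K \frac{x_k(x_k-y_k)}{y_k} = \sum_{k=1}^{K-1} (x_k-y_k)\left(\frac{x_k}{y_k} - \frac{x_K}{y_K}\right) = \sum_{k=1}^{K-1} \frac{(x_k-y_k)(x_k y_K - y_k x_K)}{y_k y_K}.
\]
Since $1-\sum_{i=1}^{K-1} y_i = y_K$, this already matches the denominator stated in the lemma.

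All that remains is to show that the numerator $(x_k-y_k)(x_k y_K - y_k x_K)$ equals $(x_k-y_k)^2 + (x_k y_k - x_k^2)(1-y_K) + (x_k y_k - y_k^2)(1-x_K)$. This is a purely algebraic expansion: both sides equal $x_k^2 y_K + y_k^2 x_K - x_k y_k(x_K+y_K)$ after using $1-y_K = 1-y_K$ and $1-x_K = 1-x_K$ and collecting like terms.

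The main obstacle here is not conceptual but purely notational: the lemma as stated buries a very simple inequality inside a cumbersome rewriting. The only delicate point is keeping the bookkeeping straight when eliminating the $K$-th index via the normalization constraint and verifying that the peculiar cross-terms $(x_k y_k - x_k^2)(1-y_K)$ and $(x_k y_k - y_k^2)(1-x_K)$ really are what one obtains after expansion. No deeper inequality (Pinsker, $\chi^2$-divergence comparison, Jensen, etc.) appears to be needed beyond $\log t \le t-1$.
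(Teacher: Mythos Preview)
Your proposal is correct and arrives at exactly the same intermediate bound and the same algebraic rewriting as the paper, but reaches the intermediate bound more directly. The paper sets $z_k=y_k-x_k$, differentiates $D$ with respect to the $z_k$, applies the mean value theorem to write $D=-\sum_k x_k z_k/(x_k+cz_k)$ for some $c\in[0,1]$, observes this is monotone in $c$, and takes $c=1$ to obtain $D\le\sum_k x_k(x_k-y_k)/y_k$. Your one-line use of $\log t\le t-1$ yields the identical inequality with less machinery. After that point the two arguments coincide: peel off the $K$-th term, use $x_K-y_K=-\sum_{k<K}(x_k-y_k)$, and expand the numerator $(x_k-y_k)(x_ky_K-y_kx_K)$ into the form stated in the lemma. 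Your algebraic verification of that expansion is correct. In short, the approaches are the same in structure; your version is simply a cleaner way to obtain the key inequality.
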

%
The following theorem shows the existence of a worst case scenario in which no algorithm can reconstruct the matrix with an arbitrarily small error:
\begin{theorem}[Lower bound]
Fix $\alpha$, $r$, $d_1$, and $d_2$ to be such that $\alpha, d_1, d_2 \geq 1$, $r \geq 4$ and $\alpha^2 r \max\{d_1,d_2\} \geq C_0$. Assume that each $f_k$ is decreasing, and its derivative $f_k'$ is increasing in $[\alpha-1/4,\alpha]$ for all $k\in [K-1]$, and $f_K(\alpha-1/4)>1/2$. Let $\Omega$ be any subset of $[d_1] \times [d_2]$ with cardinality $m$. Let $Y$ be as in (\ref{observationmodel}) and $\beta_{\alpha}^+$ be as in (\ref{constant2}). Consider any algorithm which, for any $M \in \mathcal{S}$, returns an estimator $\widehat{M}$. Then there exists $M \in \mathcal{S}$ such that with probability at least $3/4$,
\begin{equation}
\begin{split}
&\frac{1}{d_1 d_2} R(M, \widehat{M}) \\
&\quad \geq \min\left\{C_1, C_2 \frac{\alpha}{\sqrt{K\beta_{\alpha}^+}} \sqrt{\frac{r\max\{d_1,d_2\}}{m}}\right\},
\end{split}
\label{lowerbound}
\end{equation}
as long as the right-hand side of (\ref{lowerbound}) exceeds $r\alpha^2 /\min\{d_1,d_2\}$, where $C_0, C_1,C_2$ are absolute constants.
\label{maintheorem2}
\end{theorem}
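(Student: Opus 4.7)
The plan is a classical Fano-type minimax argument adapted to categorical observations, paralleling the one-bit lower bound in \cite{davenport20141} but using Lemma~\ref{KLdivergence} to control the multinomial KL. It has three parts: construct a rich, well-separated packing $\chi \subset \mathcal{S}$; bound the KL divergence between the observation distributions that distinct packing elements induce; and apply Fano's inequality to conclude that the minimax risk is at least a constant multiple of the squared packing separation.

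For the packing, assume without loss of generality that $d_2 \geq d_1$ and apply Lemma~\ref{packingset} with a parameter $\gamma \in (0,1]$ to obtain a sign-valued family $\{X^{(\ell)}\}$ of size $\log|\chi| \geq r d_2/(16\gamma^2)$, each with entries of magnitude $\alpha\gamma$ and pairwise Frobenius separation exceeding $\alpha\gamma\sqrt{d_1 d_2/2}$. The side condition $r/\gamma^2 \leq d_1$ is precisely what produces the theorem's regime requirement on $r\alpha^2/\min\{d_1,d_2\}$. Because the hypotheses on $f_k$ apply only on the window $[\alpha-1/4,\alpha]$, translate the packing by setting $M^{(\ell)} = (\alpha-\alpha\gamma)\mathbf{1}\mathbf{1}^\top + X^{(\ell)}$, so each entry lies in $\{\alpha-2\alpha\gamma,\alpha\}\subset[\alpha-1/4,\alpha]$; this is legitimate once $\gamma \leq 1/(8\alpha)$, which the final choice will respect. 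The infinity-norm constraint in $\mathcal{S}$ is immediate, the nuclear-norm budget is preserved via the triangle inequality applied to the rank-one shift, and the Frobenius separation is inherited: $\|M^{(i)}-M^{(j)}\|_F^2 > \alpha^2\gamma^2 d_1 d_2/2$.

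Next, for a fixed pair $(i,j)$ apply Lemma~\ref{KLdivergence} entrywise. The assumption that $f_k$ is decreasing on $[\alpha-1/4,\alpha]$ for $k < K$ forces $f_K = 1 - \sum_{k<K} f_k$ to be increasing, so together with $f_K(\alpha-1/4)>1/2$ every denominator $y_K = f_K(M^{(j)}_{ij})$ is at least $1/2$, while the factors $y_k$ can be compared to $f_k$ at a nearby point using $L_\alpha^{(k)} \leq L_\alpha$. A mean-value expansion bounds each $(f_k(M^{(i)}_{ij})-f_k(M^{(j)}_{ij}))^2$ by $\beta_\alpha^+ f_k(\xi)(M^{(i)}_{ij}-M^{(j)}_{ij})^2$, and the two cross-term numerators $(x_ky_k-x_k^2)(1-y_K)$ and $(x_ky_k-y_k^2)(1-x_K)$ are rewritten as $(x_k-y_k)$ times bounded quantities and controlled analogously. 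Summing over $k\in [K-1]$ produces a per-entry bound $D(f(M^{(i)}_{ij})\|f(M^{(j)}_{ij})) \leq C K \beta_\alpha^+ (M^{(i)}_{ij}-M^{(j)}_{ij})^2 \leq C' K\beta_\alpha^+\alpha^2\gamma^2$, and aggregating over the binomial sampling pattern (with $\mathbb{E}|\Omega|=m$) gives a total KL of at most $C''mK\beta_\alpha^+\alpha^2\gamma^2$.

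Plugging this into Fano's inequality,
\[
\inf_{\widehat M}\sup_{M\in \chi}\mathbb{P}(\text{identification error}) \geq 1 - \frac{C''mK\beta_\alpha^+\alpha^2\gamma^2 + \log 2}{rd_2/(16\gamma^2)},
\]
and choosing $\gamma^4 \asymp rd_2/(mK\beta_\alpha^+\alpha^2)$ both forces the error probability above $3/4$ and, via the quarter-separation threshold $\|\widehat M - M\|_F^2 \geq \alpha^2\gamma^2 d_1 d_2/8$, produces the claimed per-entry lower bound of order $(\alpha/\sqrt{K\beta_\alpha^+})\sqrt{r\max\{d_1,d_2\}/m}$. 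The main obstacle is the second step: Lemma~\ref{KLdivergence} is asymmetric and includes cross terms that do not immediately collapse into a $\chi^2$-like bound, so each must be reorganized and bounded individually, and it is precisely the monotonicity hypotheses on $f_k$, $f_k'$ together with $f_K(\alpha-1/4)>1/2$ that make each denominator bounded below and allow $(f_k')^2/f_k$ to be replaced by $\beta_\alpha^+$ uniformly on the small interval. A secondary care point is verifying that the rank-one shift leaves $M^{(\ell)}$ inside $\mathcal{S}$, which is exactly the source of the regime condition appended to the theorem statement.
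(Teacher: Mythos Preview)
Your plan is essentially the paper's: build a packing via Lemma~\ref{packingset}, shift it so that all entries land in $[\alpha-1/4,\alpha]$, bound the pairwise KL through Lemma~\ref{KLdivergence}, and finish with Fano. Two points deserve care, however.

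First, the nuclear-norm check does not go through as you wrote it. Applying Lemma~\ref{packingset} with the full parameter $\alpha$ already saturates the budget, $\|X^{(\ell)}\|_* \le \alpha\sqrt{rd_1d_2}$, so after adding the rank-one shift $(\alpha-\alpha\gamma)\mathbf{1}\mathbf{1}^\top$ the triangle inequality only gives $\|M^{(\ell)}\|_* \le \alpha\sqrt{rd_1d_2} + \alpha\sqrt{d_1d_2}$, which violates $M^{(\ell)}\in\mathcal{S}$. The paper handles this by invoking Lemma~\ref{packingset} with $\alpha/2$ in place of $\alpha$ and then shifting by $\alpha(1-\gamma/2)\mathbf{1}$; the resulting entries lie in $\{(1-\gamma)\alpha,\alpha\}$ and one gets $\|M^{(\ell)}\|_* \le (\alpha/2)\sqrt{rd_1d_2}+\alpha\sqrt{d_1d_2}\le \alpha\sqrt{rd_1d_2}$ precisely from the hypothesis $r\ge 4$. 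Your construction is easily repaired the same way.

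Second, your KL step asserts a clean per-entry bound $D\le CK\beta_\alpha^+(\Delta M_{ij})^2$ coming out of Lemma~\ref{KLdivergence}, but the two cross terms there are individually first order in $x_k-y_k$, not second order, and ``controlled analogously'' hides nontrivial cancellation. The paper does not attempt this: it bounds the cross contribution crudely by $m(1-f_K(\alpha'))^2/f_K(\alpha')\le m/2$ using only $f_K(\alpha-1/4)>1/2$, obtaining $D\le 2mK\beta_\alpha^+(\gamma\alpha)^2 + m/2$, and then absorbs the extra $m/2$ into a short quadratic case analysis at the end. Your tighter quadratic bound is in fact attainable (e.g.\ via the $\chi^2$ inequality $D(p\|q)\le\sum_k(p_k-q_k)^2/q_k$ together with the monotonicity of $f_k$ to control the denominators), and it does streamline the final step; just be explicit about how the cross terms collapse. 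Also note that in this theorem $\Omega$ is a fixed set of size $m$, not binomial, so the aggregation is a straight sum over $m$ entries rather than an expectation.
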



\begin{remark}
The ratio between the upper bound and lower bound is proportion to $K^{3/2}L_{\alpha}\sqrt{\beta_{\alpha}^+}/\beta_{\alpha}^-$. However, if we carefully construct $f_k$ as in Remark 2 so that $K L_{\alpha}/(\beta_{\alpha}^-)$ is less than some absolute constant, the gap between the upper and the lower bound can be reduced to a factor that is on the order of $O(\sqrt{K})$.
\end{remark}

\section{Numerical examples}

To test the performance of the regularized maximum estimator on real data, we consider the MovieLens dataset (can be downloaded at http://www.grouplens.org). The dataset contains $10^5$ movie ratings from $942$ users can be viewed as a $942$-by-$1683$ matrix whose entries take value $\{1,2,3,4,5\}$. We first randomly select $5000$ ratings to fit a multi-nomial logit regression model, and then solve the optimization problem (\ref{optimization_problem}) using another randomly selected $95,000$ ratings as observed entries.
Finally, we use the remaining $5000$ ratings as test data and compute the average difference between the true rating and the predicted ratings based on the recovered matrix, as shown in the first row of Table \ref{tab1}. We compare the result with that obtained from the conventional matrix completion by rounding the recovered entries to $\{1, 2, 3, 4, 5\}$. The results are shown in the second row of Table \ref{tab1}.

%

The results show that our method performs better when the original rating is larger than $3$, and has better overall performance. This can possibly be explained by the following reasoning. Our multinomial logistic model is fitted using the training data, and in the MovieLens dataset there are relatively few ratings with values 1, 2 and 3. Therefore, the fitted link functions $f_k$ for $k = 1, 2, 3$, have much less accuracy than $k = 4$ and $5$. Hence, we can see that the categorical matrix completion performs better when the true rating is 4 and 5.

%

\begin{table}[h]
{\scriptsize
\caption{Average differences between true ratings $M_{ij}$ and recovered ratings $\hat{M}_{ij}$, when they are both in $\{1,2,3,4, 5\}$.
}
\begin{center}
  \begin{tabular}{ | c | c | c | c | c | c || c |}
    \hline
    Original Rating & 1 & 2 & 3 & 4 & 5 & Overall \\ \hline
    Categorical & 1.537 & 0.958 & 0.461 & 0.489 & 0.986 & 0.708 \\ \hline
    Real-valued (conventional) & 0.039 & 0.119 & 0.428 &1.159&1.244 &0.783\\
    \hline
  \end{tabular}
  \label{tab1}
\end{center}}
\end{table}

\section{Discussions}

We have studied a nuclear norm regularized maximum likelihood estimator for categorical matrix completion, as well as presented an upper bound and an information theoretic lower bound for our proposed estimator. Our upper and lower bounds meet up to a constant factor $\mathcal{O}(K^{3/2})$ where $K$ is the fixed number of categories, and this factor can become $\mathcal{O}(K^{1/2})$ in some special cases. 
Our current formulation assumes that the input variables form a low-rank matrix and each response is linked only to one corresponding input variable. Future extension may include a formulation that allows more general link functions with multiple input variables and exploits a low-rank tensor structure.

\section*{Acknowledgement}
The authors would like to thank Prof. Mark Davenport for stimulating discussions. This work is partially supported by NSF grant CCF-1442635.

\bibliography{K_bit_MatrixCompletion}

\clearpage

\appendices

\section*{Proofs}

\begin{proof}[Proof of Lemma \ref{firstlemma}]

In order to prove the lemma, we let
$\epsilon_{ij}$ are i.i.d. Rademacher random variables.
    In the following derivation, the first inequality uses the Radamacher symmetrization argument (Lemma 6.3 in \cite{ledoux1991probability}) and the second inequality is due to the power mean inequality: $(\sum_{i=1}^K c_i)^h \leq K^{h-1}(\sum_{i=1}^K c_i^h)$ if $c_i \geq 0, \forall 1\leq i\leq K$ and $h\geq 1$. Then we have
        \begin{equation}
        \begin{split}
        &\mathbb{E}\left[\sup_{X\in \mathcal{S}} \left| \bar{F}_{\Omega, Y}(X) - \mathbb{E}\bar{F}_{\Omega, Y}(X) \right|^h \right] \\
        \leq& 2^h \mathbb{E}\left[ \sup_{X\in \mathcal{S}} \left| \sum_{i,j} \epsilon_{ij} \mathbb{I}_{[(i,j)\in \Omega]} \left(\sum_{k=1}^K \mathbb{I}_{[Y_{ij} = a_k]} \log \frac{f_k(X_{ij})}{f_k(0)} \right) \right|^h \right]\\
        =& 2^h \mathbb{E}\left[ \sup_{X\in \mathcal{S}} \left| \sum_{k=1}^K \sum_{i,j} \epsilon_{ij} \mathbb{I}_{[(i,j)\in \Omega]} \mathbb{I}_{[Y_{ij} = a_k]} \log \frac{f_k(X_{ij})}{f_k(0)} \right|^h \right]\\
        \leq& 2^h K^{h-1} \sum_{k=1}^K \mathbb{E}\left[ \sup_{X\in \mathcal{S}} \left| \sum_{i,j} \epsilon_{ij} \mathbb{I}_{[(i,j)\in \Omega]} \mathbb{I}_{[Y_{ij} = a_k]} \log \frac{f_k(X_{ij})}{f_k(0)} \right|^h \right]\\
        \leq & 2^h K^{h-1} \sum_{k=1}^K \left( \mathbb{E} \left[ \max_{i,j} \left|  \mathbb{I}_{[Y_{ij} = a_k]} \right|^h \right] \cdot \right. \\
        & \left. \mathbb{E}\left[ \sup_{X\in \mathcal{S}} \left| \sum_{i,j} \epsilon_{ij} \mathbb{I}_{[(i,j)\in \Omega]} \log \frac{f_k(X_{ij})}{f_k(0)} \right|^h \right] \right)\\
        \leq &2^h K^{h-1} \sum_{k=1}^K  \mathbb{E}\left[ \sup_{X\in \mathcal{S}} \left| \sum_{i,j} \epsilon_{ij} \mathbb{I}_{[(i,j)\in \Omega]} \log \frac{f_k(X_{ij})}{f_k(0)} \right|^h \right]
        \label{wholepart}
        \end{split}
        \end{equation}
    where the expectation are over both $\Omega$ and $Y$.

    In the following, we will use contraction principle to further bound the first term of (\ref{wholepart}). By the definition of $L_\alpha^{(k)}$, we know that
    $$
    \frac{1}{L_\alpha^{(k)}} \log\frac{f_k(x)}{f_k(0)}
    $$
    are contractions that vanish at $0$ for all $k =1,2,\ldots, K$.
    By Theorem 4.12 in \cite{ledoux1991probability} and using the fact that $|\langle A,B \rangle| \leq \|A\|\|B\|_*$, we have
           \begin{equation}
        \begin{split}
        &\mathbb{E}\left[\sup_{X\in \mathcal{S}} \left| \bar{F}_{\Omega, Y}(X) - \mathbb{E}\bar{F}_{\Omega, Y}(X) \right|^h \right] \\
        &\leq 2^h K^{h-1} \sum_{k=1}^K \left(2L_\alpha^{(k)}\right)^h \mathbb{E}\left[ \sup_{X\in \mathcal{S}} \left| \sum_{i,j} \epsilon_{ij} \mathbb{I}_{[(i,j)\in \Omega]} X_{ij} \right|^h \right] \\
          &\leq (4K)^h \left(\max_{1\leq k\leq K} L_\alpha^{(k)} \right)^h \mathbb{E}\left[ \sup_{X \in \mathcal{S}} \|E \circ \Delta_{\Omega} \|^h \|X\|_*^h \right] 
           \end{split}\nonumber
        \end{equation}
         \begin{equation}
        \begin{split}
        &\leq (4K)^h \left(L_{\alpha} \right)^h \left(\alpha \sqrt{r d_1 d_2}\right)^h \mathbb{E}\left[ \|E \circ \Delta_{\Omega} \|^h \right],
        \end{split}\nonumber
        \end{equation}
    where $E$ denotes the matrix with entries given by $\epsilon_{ij}$, $\Delta_{\Omega}$ denotes the indicator matrix for $\Omega$ and $\circ$ denotes the Hadamard product.

    To bound $\mathbb{E} \left[\|E \circ \Delta_{\Omega} \|^h\right]$, we can use the result from \cite{davenport20141} if we take $h=\log(d_1 d_2) \geq 1$:
    \begin{equation}
    \begin{split}
    &\mathbb{E} \left[\|E \circ \Delta_{\Omega} \|^h\right] \\
    \leq& C_0 \left(2(1+\sqrt{6})\right)^h \left( \sqrt{\frac{m(d_1+d_2)+d_1 d_2 \log(d_1 d_2)}{d_1 d_2}} \right)^h
    \end{split}\nonumber
    \end{equation}
    for some constant $C_0$.

    Moreover when $C'\geq 8\left(1+\sqrt{6}\right)e$,
    $$
    C_0\left( \frac{8(1+\sqrt{6})}{C'} \right)^{\log(d_1 d_2)} \leq \frac{C_0}{d_1 d_2}
    $$

    Therefore we can use Markov inequality to see that
    \begin{equation}
    \begin{aligned}
    &\mathbb{P} \left\{ \sup_{X\in \mathcal{S}} \left| F_{\Omega, Y}(X)-\mathbb{E}F_{\Omega, Y}(X)\right| \right. \\
    & \quad \left. \geq C'K\left( \alpha\sqrt{r} \right)  L_{\alpha} \cdot \right. \\
    & \quad \left. \left(\sqrt{m(d_1+d_2)+d_1 d_2 \log(d_1 d_2)}\right) \right\} \\
    =&\mathbb{P} \left\{ \sup_{X\in \mathcal{S}} \left| F_{\Omega, Y}(X)-\mathbb{E}F_{\Omega, Y}(X)\right|^h \right. \\
    & \quad \left. \geq \left(C'K\left( \alpha\sqrt{r} \right) L_{\alpha} \cdot \right.\right. \\
    & \quad \left. \left. \left(\sqrt{m(d_1+d_2)+d_1 d_2 \log(d_1 d_2)}\right)\right)^h \right\} \\
    \leq & \mathbb{E}\left[\sup_{X\in \mathcal{S}} \left| F_{\Omega, Y}(X) - EF_{\Omega, Y}(X) \right|^h \right]/\\
    &
    \{\left(C'K\left( \alpha\sqrt{r} \right) L_{\alpha} \right.\cdot \\
    &\left.\left(\sqrt{m(d_1+d_2)+d_1 d_2 \log(d_1 d_2)}\right) \right)^h \}
    \leq  \frac{C}{d_1  d_2}, \nonumber
    \end{aligned}
    \end{equation}
    where $C' \geq 8(1+\sqrt{6})e$ and $C$ are absolute constants.

\end{proof}

\begin{proof}[Proof of Lemma \ref{secondlemma}]
Assuming $x$ is any entry in $M$ and $y$ is any entry in $\widehat{M}$, then $-\alpha \leq x,y \leq \alpha$ and by
the mean value theorem there exists $\xi_k \in [x,y]$ for each $k \in [K]$ such that
$$
\sqrt{f_k(x)} - \sqrt{f_k(y)} = \frac{f_k'(\xi_k)}{2\sqrt{f_k(\xi_k)}}(x-y).
$$
By the assumption of $f$, there exist at least one $k\in [K]$ such that $f_k'(\xi_k) \neq 0$. 
Then
\begin{equation}
\begin{split}
&d_H^2\left( \left(f_1(x),\ldots,f_K(x)\right) \| \left( f_1(y),\ldots, f_K(y) \right) \right)\\
=& \sum_{k=1}^K \left( \sqrt{f_k(x)} - \sqrt{f_k(y)} \right)^2 
= \frac{(x-y)^2 }{4} \sum_{k=1}^K \frac{(f_k'(\xi_k))^2}{f_k(\xi_k)} \nonumber
\end{split}
\end{equation}
\begin{equation}
\begin{split}
\geq & \frac{(x-y)^2 }{4}  \left(\max_{1\leq k \leq K} \frac{(f_k'(\xi_k))^2}{f_k(\xi_k)}\right) \\
\geq& \frac{(x-y)^2 }{4} \left(\inf_{|\xi|\leq \alpha} \left(\max_{1\leq k \leq K} \frac{(f_k'(\xi))^2}{f_k(\xi)}\right)\right)
\end{split}
\label{se1}
\end{equation}
Then the lemma is proven by summing across all entries and dividing by $d_1 d_2$.

\end{proof}

\begin{proof}[Proof of Lemma \ref{KLdivergence}]
Let $z_k = y_k - x_k$ for each $k \in [K]$, then
\begin{equation}
\begin{split}
&D\left( (x_1,\ldots,x_K) \| (y_1,\ldots,y_K) \right) \\
=& D\left( (x_1,\ldots,x_K) \| (x_1+z_1,\ldots,x_K+z_K) \right) \\
=& \sum_{k=1}^K x_k \log\frac{x_k}{x_k+z_k}.
\end{split}\nonumber
\end{equation}
And then we have for each $k \in [K]$
$$
\frac{\partial  D\left((x_1,\ldots,x_K) \| (x_1+z_1,\ldots,x_K+z_K) \right)}{\partial z_k} = -\frac{x_k}{x_k+z_k}.
$$

By mean value theorem, we have
\begin{equation}
\begin{split}
&D\left( (x_1,\ldots,x_K) \| (x_1+z_1,\ldots,x_K+z_K) \right) \\
=& -\sum_{k=1}^K \frac{x_k z_k}{x_k+c z_k}
\end{split}
\label{meanvalue1}
\end{equation}
for some $c \in [0,1]$.
Since for each $k \in [K]$
$$
\left( -\frac{x_k z_k}{x_k+ c z_k} \right)' = \frac{x_k z_k^2}{(x_k+c z_k)^2} \geq 0,
$$
the right-hand side of (\ref{meanvalue1}) is an increasing function in $c$ and hence
$$
D\left( (x_1,\ldots,x_K) \| (y_1,\ldots,y_K) \right) \leq \sum_{k=1}^K \frac{x_k}{y_k}(x_k-y_k).
$$
Noting that $x_K=1-\sum_{i=1}^{K-1}x_i$ and $y_K=1-\sum_{i=1}^{K-1}y_i$, we have
\begin{equation}
\begin{split}
&\sum_{k=1}^K \frac{x_k}{y_k}(x_k-y_k) 
= \sum_{k=1}^{K-1}\left( \frac{x_k}{y_k} - \frac{1-\sum_{i=1}^{K-1}x_i}{1-\sum_{i=1}^{K-1}y_i}\right) (x_k-y_k)\\
=& \sum_{k=1}^{K-1}[(x_k-y_k)^2+x_k y_k (2-x_K-y_K) \\
&\qquad -x_k^2(1-y_K)-y_k^2(1-x_K)]/[y_k\left(1-\sum_{i=1}^{K-1}y_i\right)] \nonumber
\end{split}
\end{equation}
\begin{equation}
\begin{split}
=& \sum_{k=1}^{K-1} [(x_k-y_k)^2+(x_k y_k -x_k^2)(1-y_K)\\
&\qquad +(x_k y_k-y_k^2)(1-x_K)]/[y_k(1-\sum_{i=1}^{K-1}y_i)],
\end{split}
\end{equation}
where the last inequality uses the fact that $0\leq x_k,y_k \leq 1 ~ \forall k\in [K]$.

\end{proof}

\begin{proof}[Proof of Theorem \ref{maintheorem}]
First, note that
    \begin{equation}
    \bar{F}_{\Omega,Y}(X) - \bar{F}_{\Omega,Y}(M) = \sum_{(i,j)\in \Omega} \sum_{k=1}^K \mathbb{I}_{(Y_{ij}=a_k)} \log\frac{f_k(X_{ij})}{f_k(M_{ij})}.  \nonumber
    \end{equation}
    Then for any $X \in \mathcal{S}$,
    \begin{equation}
    \begin{aligned}
    &\mathbb{E}\left[\bar{F}_{\Omega,Y}(X) - \bar{F}_{\Omega,Y}(M)\right] 
    = \frac{m}{d_1 d_2} \sum_{i,j}\sum_{k=1}^K f_k(M_{ij})\log\frac{f_k(X_{ij})}{f_k(M_{ij})} \\
    &=-mD(f(M)\|f(X)).
    \end{aligned}
    \end{equation}

    For $M\in \mathcal{S}$,  we know $\widehat{M} \in \mathcal{S}$ and $F_{\Omega,Y}(\widehat{M}) \geq F_{\Omega,Y}(M) $. Thus we  write
    \begin{equation*}
    \begin{aligned}
    0&\leq F_{\Omega,Y}(\widehat{M}) - F_{\Omega,Y}(M) = \bar{F}_{\Omega,Y}(\widehat{M}) - \bar{F}_{\Omega,Y}(M) \\
    &= \bar{F}_{\Omega,Y}(\widehat{M}) + \mathbb{E}\bar{F}_{\Omega,Y}(\widehat{M}) - \mathbb{E}\bar{F}_{\Omega,Y}(\widehat{M}) \\
    & ~~ + \mathbb{E}\bar{F}_{\Omega,Y}(M) - \mathbb{E}\bar{F}_{\Omega,Y}(M) - \bar{F}_{\Omega,Y}(M) \\
    &\leq \mathbb{E}\left[ \bar{F}_{\Omega,Y}(\widehat{M}) - \bar{F}_{\Omega,Y}(M) \right] + \\
    & ~~ \left| \bar{F}_{\Omega,Y}(\widehat{M})-\mathbb{E}\bar{F}_{\Omega,Y}(\widehat{M})\right| + \left|\bar{F}_{\Omega,Y}(M) - \mathbb{E}\bar{F}_{\Omega,Y}(M)\right| \\
    &\leq -m D(f(M)\|f(\widehat{M})) + 2\sup_{X\in \mathcal{S}} \left| \bar{F}_{\Omega,Y}(X) - \mathbb{E}\bar{F}_{\Omega,Y}(X)\right|.
    \end{aligned}
    \end{equation*}
    Applying Lemma \ref{firstlemma}, we obtain that with probability at least $\left(1-C /(d_1 d_2)\right)$,
    \begin{equation*}
    \begin{split}
    0 &\leq -m D(f(M)\|f(\widehat{M})) \\
    &+ 2C'K\left( \alpha\sqrt{r} \right)  L_{\alpha} \cdot 
    \left(\sqrt{m(d_1+d_2)+d_1 d_2 \log(d_1 d_2)}\right).
    \end{split}
    \end{equation*}
    After rearranging terms and applying the fact that $\sqrt{d_1 d_2} \leq d_1 + d_2$, we obtain
    \begin{equation}
    \begin{split}
   &D(f(M)\|f(\widehat{M}))\leq \\
    \hspace{-0.3in} & 2C'K\left( \alpha\sqrt{r} \right)  L_{\alpha} \cdot  \left(\sqrt{\frac{d_1+d_2}{m}} \sqrt{1 +\frac{(d_1+ d_2) \log(d_1 d_2)}{m}}\right).
    \label{theoremuse1}
    \end{split}
    \end{equation}
    Note that the KL divergence can be bounded below by the Hellinger distance (Chapter 3 in \cite{pollard2002user}):
    $
    d_H^2(x,y) \leq D(x\|y).
    $
    Thus from (\ref{theoremuse1}), we obtain
    \begin{equation}
    \begin{split}
    &d_H^2(f(M),f(\widehat{M})) \leq  2C'K\left( \alpha\sqrt{r} \right)  L_{\alpha} \cdot \\
    &~~~ \left(\sqrt{\frac{d_1+d_2}{m}} \sqrt{1 +\frac{(d_1+ d_2) \log(d_1 d_2)}{m}}\right).
    \end{split}
    \end{equation}
    Finally, Theorem \ref{maintheorem} is proved by applying Lemma \ref{secondlemma}.
\end{proof}

\begin{proof}[Proof of Theorem \ref{maintheorem2}]

We will prove by contradiction.
Lemma \ref{packingset} and Lemma \ref{KLdivergence} are used in the proof. Without loss of generality,  assume $d_2 \geq d_1$. Choose $\epsilon > 0$ such that
$$
\epsilon^2 = \min\left\{ \frac{1}{1024}, C_2 \frac{\alpha }{\sqrt{K\beta_{\alpha}^+}} \sqrt{\frac{r d_2}{m}}\right\},
$$
where $C_2$ is an absolute constant that will be be specified later.
First, choose $\gamma$ such that $\frac{r}{\gamma^2}$ is an integer and
$$
\frac{4\sqrt{2}\epsilon}{\alpha} \leq \gamma \leq \frac{8\epsilon}{\alpha} \leq \frac{1}{4\alpha}
$$
We may make such a choice because
$$
\frac{\alpha^2 r}{64\epsilon^2} \leq \frac{r}{\gamma^2} \leq \frac{\alpha^2 r}{32\epsilon^2}
$$
and
$$
\frac{\alpha^2 r}{32\epsilon^2} - \frac{\alpha^2 r}{64\epsilon^2}  = \frac{\alpha^2 r}{64\epsilon^2} > 4\alpha^2 r > 1.
$$
Furthermore, since we have assumed that $\epsilon^2$ is larger than $C r\alpha^2/d_1$, $r/\gamma^2 \leq d_1$ for an appropriate choice of $C$.
Let $\chi'_{\alpha/2, \gamma}$ be the set defined in Lemma \ref{packingset}, by replacing $\alpha$ with $\alpha/2$ and with this choice of $\gamma$. Then we can construct a packing set
$\chi$ of the same size as $\chi'_{\alpha/2, \gamma}$ by defining
$$
\chi \triangleq \left\{ X' + \alpha\left(1-\frac{\gamma}{2}\right) \textbf{1}_{d_1 \times d_2} : X' \in \chi'_{\alpha/2, \gamma} \right\}.
$$
The distance between pairs of elements in $\chi$ is bounded since
\begin{equation}
\|X^{(i)} - X^{(j)} \|_F^2 \geq \frac{\alpha^2}{4}\frac{\gamma^2 d_1 d_2}{2} \geq 4 d_1 d_2 \epsilon^2.
\end{equation}
Define $\alpha' \triangleq (1-\gamma)\alpha$, then every entry of $X \in \chi$ has $X_{ij} \in \{\alpha, \alpha'\}$. Since we have assumed $r \geq 4$, for every $X \in \chi$, we have
\begin{align*}
\|X\|_* &= \| X' + \alpha\left(1-\frac{\gamma}{2}\right) \textbf{1}_{d_1 \times d_2}\|_* \\
&\leq \|X'\|_* + \alpha(1-\frac{\gamma}{2})\sqrt{d_1 d_2}\\
&\leq \frac{\alpha}{2} \sqrt{r d_1 d_2} + \alpha \sqrt{d_1 d_2}
\leq \alpha \sqrt{r d_1 d_2},
\end{align*}
for some $X' \in \chi'_{\alpha/2, \gamma}$.
Since the $\gamma$ we choose is less than $1/2$, $\alpha'$ is greater than $\alpha/2$. Therefore, from the assumption that $\beta \leq \alpha/2$, we conclude that $\chi \subset \mathcal{S}$.

Now consider an algorithm that for any $X \in \mathcal{S}$ returns $\widehat{X}$ such that
\begin{equation}
\frac{1}{d_1 d_2} \|X-\widehat{X}\|_F^2 < \epsilon^2
\label{assumption}
\end{equation}
with probability at least $1/4$. Next, we will show this leas to an contradiction. Let
$$
X^* = \arg \min_{X^{(i)} \in \chi} \|X^{(i)} - \widehat{X}\|_F^2,
$$
by the same argument as that in \cite{davenport20141}, we have $X^* = X$ as long as (\ref{assumption}) holds. Using the assumption that (\ref{assumption}) holds with probability at least
$1/4$, we have
\begin{equation}
\mathbb{P} (X^* \neq X) \leq \frac{3}{4}.
\label{inequality1}
\end{equation}
Using a generalized Fano's inequality for the KL divergence in \cite{yu1997assouad}, we have
\begin{equation}
\mathbb{P} (X^* \neq X) \geq 1- \frac{\max_{X^{(k)} \neq X^{(l)}} D(Y_{\Omega}|X^{(k)} \| Y_{\Omega}|X^{(l)})+1}{\log |\chi|}.
\label{inequality2}
\end{equation}
Define
\[
D \triangleq D(Y_{\Omega}|X^{(k)}\|Y_{\Omega}|X^{(l)}) = \sum_{(i,j) \in \Omega} D(Y_{ij}|X^{(k)}_{ij}\| Y_{ij}|X^{(l)}_{ij} ).
\]
Because the entries of $X^{(k)}$ and $X^{(l)}$ are $\alpha$ or $\alpha'$, from Lemma \ref{KLdivergence}, we have
\begin{equation}
\begin{split}
D \leq& m\sum_{k=1}^{K-1}\frac{\left(f_k(\alpha)-f_k(\alpha')\right)^2}{\min \left\{f_k(\alpha)f_K(\alpha), f_k(\alpha')f_K(\alpha') \right\}} \\
+& m\sum_{k=1}^{K-1}\frac{(f_k(\alpha)f_k(\alpha')-f_k^2(\alpha))(1-f_K(\alpha'))}{\min \left\{f_k(\alpha)f_K(\alpha), f_k(\alpha')f_K(\alpha') \right\}} \\
+& m\sum_{k=1}^{K-1}\frac{(f_k(\alpha)f_k(\alpha')-f_k^2(\alpha'))(1-f_K(\alpha))}{\min \left\{f_k(\alpha)f_K(\alpha), f_k(\alpha')f_K(\alpha') \right\}}.
\end{split}
\end{equation}
Considering the assumptions of $f_k$ in the theorem for all $k\in [K]$, we can further bound $D$ as follows:
\begin{equation}
\begin{split}
D \leq& m\sum_{k=1}^{K-1}\frac{(f_k'(\alpha))^2(\alpha-\alpha')^2}{f_k(\alpha)f_K(\alpha')} \\
+&  m\sum_{k=1}^{K-1} \frac{(f_k(\alpha')-f_k(\alpha))(1-f_K(\alpha'))}{f_K(\alpha')} \\
\leq& m(K-1) \beta_{\alpha}^+ \frac{(\gamma \alpha)^2}{f_K(\alpha')} + m\frac{(1-f_K(\alpha'))^2}{f_K(\alpha')} \\
\leq& 2mK \beta_{\alpha}^+ (\gamma \alpha)^2 + \frac{m}{2},
\end{split}
\end{equation}
where we use the mean value theorem and facts that $f_k'(\alpha)\geq f_k'(\alpha')$, $f_k(\alpha)\leq f_k(\alpha')$ and $f_K(\alpha)\geq f_K(\alpha')$ in the first inequality and fact that $f_K(\alpha')>1/2$ in the third inequality.

Combining (\ref{inequality1}) and (\ref{inequality2}), we have that
\begin{equation}
\begin{split}
\frac{1}{4} &\leq 1-\mathbb{P}(X \neq X^*) \leq \frac{D+1}{\log |\chi|} \\
&\leq 16\gamma^2 \left(\frac{D+1}{rd_2} \right) \leq 1024\epsilon^2 \left(\frac{128mK \beta_{\alpha}^+ \epsilon^2 + \frac{m}{2}+1}{\alpha^2 rd_2}\right).
\end{split}
\label{contradiction}
\end{equation}
Suppose $128mK \beta_{\alpha}^+ \epsilon^2 + \frac{m}{2} \leq 1$, then with (\ref{contradiction}), we have
$$
\frac{1}{4} \leq 1024 \epsilon^2 \frac{2}{\alpha^2 r d_2},
$$
which implies that $\alpha^2 r d_2 \leq 32$. Then if we set $C_0>32$, this leads to a contradiction.
Next, suppose $128mK \beta_{\alpha}^+ \epsilon^2 + \frac{m}{2} > 1$, then with (\ref{contradiction}), we have
$$
\frac{1}{4} < 1024\epsilon^2 \left(\frac{256mK \beta_{\alpha}^+ \epsilon^2 + m}{\alpha^2 rd_2}\right),
$$
thus,
$$
\epsilon^2 > \frac{-1+\sqrt{1+\frac{\alpha^2rd_2 K \beta_{\alpha}^+}{4m}}}{512K \beta_{\alpha}^+}.
$$
By using the fact that $\sqrt{a^2+b^2} \geq (a+b)/\sqrt{2}$ for any $a,b>0$, we have
$$
\epsilon^2 > \frac{\alpha}{1024\sqrt{2}\sqrt{K\beta_{\alpha}^+}}\sqrt{\frac{rd_2}{m}}
$$
Setting $C_2 \leq 1/1024\sqrt{2}$, this leads to a contradiction. Therefore, (\ref{assumption}) must be incorrect with probability at least $3/4$. This concludes our proof.

\end{proof}

\end{document}